\documentclass{amsart}

\usepackage{graphicx} 
\usepackage{subfigure}
\usepackage{amsmath}
\usepackage{amsthm}
\usepackage{amssymb}
\usepackage{natbib}
\usepackage{comment}
\usepackage{amsfonts}
\usepackage{titlesec}
\titlelabel{\thetitle.\quad}
\usepackage{caption}
\usepackage{float}
\usepackage{color,xcolor,mathrsfs,mathtools,calc,bm,array,graphicx}
\usepackage{multimedia,media9,xmpmulti,siunitx,animate,tikz}

\renewcommand{\Re}{\operatorname{Re}}
\renewcommand{\Im}{\operatorname{Im}}

\newtheorem{theorem}{Theorem}
\newtheorem{lemma}{Lemma}
\newtheorem{proposition}{Proposition}

\theoremstyle{definition}

\newtheorem{example}{Example}

\theoremstyle{remark}

\title[Zeros of Harmonic Functions]{Using Real-Variable Techniques to Study Zeros of Complex-Valued Harmonic Functions}

\author[Brooks]{Jennifer Brooks}
\address{Brigham Young University Mathematics Department, Provo UT 84602 USA}
\email{jbrooks@mathematics.byu.edu}

\author[Jenkins]{Mary Jenkins}
\address{Brigham Young University Mathematics Department, Provo UT 84602 USA}
\email{mjenkins@mathematics.byu.edu}

\author[Liechty]{Samuel Liechty}
\address{Brigham Young University Mathematics Department, Provo UT 84602 USA}
\email{s1iechty@student.byu.edu}

\author[Parker]{Kaden Parker}
\address{Brigham Young University Mathematics Department, Provo UT 84602 USA}
\email{kaden.parker220@gmail.com}

\author[Perez]{Katherine Perez}
\address{Brigham Young University Mathematics Department, Provo UT 84602 USA}
\email{katperez729@gmail.com}

\author[Robison]{Dallin Robison}
\address{Brigham Young University Mathematics Department, Provo UT 84602 USA}
\email{dallinrobison@gmail.com}

\author[Sampson]{Eli Sampson}
\address{Brigham Young University Mathematics Department, Provo UT 84602 USA}
\email{elisampsonj@gmail.com}

\author[Wilson]{Matthew G. Wilson}
\address{Brigham Young University Mathematics Department, Provo UT 84602 USA}
\email{wilson.matthewg@gmail.com}

\keywords{complex-valued harmonic functions, zeros}

\subjclass{30C15}

\begin{document}

\maketitle

\begin{abstract}
    We investigate the zeros of two one-parameter families of harmonic functions and describe how the number of zeros depends on the parameter. Our functions have the property that all zeros lie on certain rays in the complex plane and thus we are able to use real-variable techniques to count the zeros on each ray. 
\end{abstract}

\section{Introduction}

We consider the zeros of two closely-related families of complex-valued harmonic functions. Such functions have the form $f = u + iv$ where both $u$ and $v$ are real-valued harmonic functions. Additionally, they can be expressed as $f = h + \overline{g}$, where $h$ and $g$ are analytic.  Much of the research to date on the zeros of such functions focuses on the polynomial case. The Fundamental Theorem of Algebra states that for non-constant {\em analytic} polynomials, the number of zeros equals the degree. However, for harmonic polynomials, this theorem does not apply and the number of zeros may be much larger than the degree. Many researchers have sought bounds on the number of zeros of these polynomials. Sheil-Small conjectured \cite{sheil} and Wilmhurst \cite{wilmshurst1998valence} proved that for a harmonic polynomial $f = h + \overline{g}$ for which $\deg h = n$, $\deg g= m$, and $m < n$, the maximum number of zeros of $f$ is $n^2$. S\`{e}te and Zur \cite{sete2024zeros} showed that for every $k  = n,n + 1,...,n^2$ there exists a harmonic polynomial of degree $n$ with $k$ zeros.

For {\em analytic} polynomials, the establishment of the Fundamental Theorem of Algebra did not end the study of their zeros; researchers have sought and continue to seek results relating the locations of these zeros to the coefficients. (See \cite{melman1} and the references therein.) Analogously, the establishment of sharp bounds on the number of zeros of complex-valued harmonic polynomials does not end their study. Because the number of zeros itself depends not only on the degree but also on the coefficients, ongoing research addresses both how the number and locations of the zeros vary with the coefficients. (See \cite{sam}, \cite{work}, \cite{legesse2022location}, \cite{melman24}.) Much of this research focuses on simple one- or two-parameter families of functions with the goal of obtaining detailed zero-counting theorems.   

Specifically, Brilleslyper et al.\ \cite{BBDHS} investigated a family of trinomials:
\begin{equation}\label{Brilleslyper}
    q(z) =  z^n + c\overline{z}^k -1
\end{equation}
where $1 \leq k \leq n-1$, $n\geq3,$ $c \in \mathbb{R}_{+}$ and $\gcd(n,k) = 1$. 
Their theorem states that as $c$ increases over the positive reals, the number of zeros increases monotonically from a minimum of $n$ to a maximum of $n + 2k$. Their proof used the Argument Principle for harmonic functions; for this family, the {\em critical curve} separating the sense-preserving and sense-reversing regions is a circle whose image under the function is a hypocycloid. They determined how the winding number about the origin of this hypocycloid varies with $c$ and were therefore able to count the zeros inside the sense-reversing region for all values of $c$. 

We investigate two families of complex-valued harmonic functions. The first is a family of harmonic polynomials of a form similar to \eqref{Brilleslyper}: 
\begin{equation}\label{ourform}
   p(z) = z^m + c(z^k +\overline{z}^k) -1, \qquad \text{where } m > k, \quad c > 0
\end{equation}
A natural question is: How does this addition of a single term affect the number of zeros as $c$ increases? The strategy for counting the zeros of the two families turns out to be quite different.  Unfortunately, because of the additional term in our family of polynomials, the critical curve is non-circular. This renders the strategy used in Brilleslyper et al. \cite{BBDHS} not viable. However, we specifically add the term $c{z}^k$, which is the conjugate of the middle term of \eqref{Brilleslyper}. Because of this addition, $z^m$ is the only non-real term in \eqref{ourform}. Conveniently, we can then reduce much of the problem to counting the zeros of a real-valued polynomial and can apply real variable techniques.

In this paper we prove the following theorem:
\begin{theorem}\label{theorem1}
Let $p(z) = z^m+c(z^k+\overline{z}^k)-1$ where $m > k$, $c \in \mathbb{R_+}$, and $\gcd(m,k) = 1$. As $c$ increases, the number of zeros of $p$ increases monotonically from a minimum of $m$ to a maximum of $m + N$, where N is determined as follows:
    \begin{itemize}
    \item If $m \equiv 0 \mod 4$, $N = m$.
    \item If $m \equiv 1 \mod 4$ and $k$ is odd, $N = m-1$.
    \item If $m \equiv 1 \mod 4$ and $k$ is even, $N = m+1$.
    \item If $m \equiv 2 \mod 4$, $N = m-2$.
    \item If $m \equiv 3 \mod 4$ and $k$ is odd, $N = m+1$.
    \item If $m \equiv 3 \mod 4$ and $k$ is even, $N = m-1$.
\end{itemize}
\end{theorem}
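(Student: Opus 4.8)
The plan is to pass to polar coordinates and exploit that the only non-real term of $p$ is $z^m$. Writing $z = re^{i\theta}$ with $r > 0$ (note $p(0) = -1 \neq 0$, so no zero sits at the origin), we have $z^k + \overline{z}^k = 2r^k\cos(k\theta)$, so
\[
p(re^{i\theta}) = \bigl(r^m\cos(m\theta) + 2cr^k\cos(k\theta) - 1\bigr) + i\,r^m\sin(m\theta).
\]
The imaginary part vanishes exactly when $\sin(m\theta) = 0$, i.e.\ on the $2m$ rays $\theta = j\pi/m$, $j = 0,1,\dots,2m-1$. On the ray indexed by $j$ one has $\cos(m\theta) = (-1)^j$, so the zeros on that ray are the positive roots of the real one-variable polynomial
\[
F_j(r) = (-1)^j r^m + 2c\,a_j\,r^k - 1, \qquad a_j := \cos\!\left(\tfrac{kj\pi}{m}\right).
\]
Since distinct $j$ give rays of distinct argument, zeros on different rays are automatically distinct, and counting the zeros of $p$ reduces to summing the numbers of positive roots of the $F_j$.

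First I would dispose of the even rays. For $j$ even, $F_j(r) = r^m + 2ca_j r^k - 1$ satisfies $F_j(0) = -1 < 0$ and $F_j(r) \to +\infty$; whatever the sign of $a_j$, the coefficient sequence exhibits exactly one sign change, so by Descartes' rule $F_j$ has exactly one positive root for every $c > 0$. The $m$ even rays therefore contribute exactly $m$ zeros for all $c$, which is the stated minimum. For $j$ odd, writing $G_j = -F_j = r^m - 2ca_j r^k + 1$, we have $G_j(0) = 1 > 0$ and $G_j(r)\to +\infty$, while $G_j$ has a unique positive critical point $r_\ast = (2ca_jk/m)^{1/(m-k)}$ when $a_j > 0$ and none when $a_j \le 0$. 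Hence: if $a_j \le 0$ then $G_j > 0$ on $(0,\infty)$ and the ray carries no zeros for any $c$; if $a_j > 0$ then $G_j$ decreases then increases, so the number of positive roots is $0$, $1$, or $2$ according to the sign of the minimum value $G_j(r_\ast)$. A short computation using $m r_\ast^{m-k} = 2ca_jk$ gives $G_j(r_\ast) = 1 - \tfrac{m-k}{k}r_\ast^m$, which is strictly decreasing in $c$; thus there is a single threshold $c_j$ at which a double root appears and past which there are exactly two simple roots.

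These facts yield monotonicity and the value of $N$ simultaneously. As $c$ increases, each odd ray with $a_j > 0$ switches once and for all from $0$ to $2$ zeros at its threshold $c_j$, while the even rays are unaffected; the total count is therefore nondecreasing, starts at $m$, and terminates at
\[
m + N, \qquad N = 2\,\#\bigl\{\, j \text{ odd},\ 0 \le j \le 2m-1 \ :\ \cos(\tfrac{kj\pi}{m}) > 0 \,\bigr\}.
\]
The remaining and main task is to evaluate this count and match it to the six cases. Here I would use periodicity to reduce $\operatorname{sgn}(a_j)$ to the location of $kj \bmod 2m$ relative to the thresholds $m/2$ and $3m/2$, together with the symmetry $a_{2m-j} = a_j$, which preserves the parity of $j$ since $2m$ is even. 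The hypothesis $\gcd(m,k) = 1$ controls exactly which residues $kj \bmod 2m$ are attained and, in particular, pins down the \emph{boundary} indices where $a_j = 0$: solving $\cos(kj\pi/m) = 0$ shows these occur only when $m$ is even, at $j \in \{m/2,\,3m/2\}$, and only when $k$ is odd. Whether those degenerate rays are even or odd depends on $m \bmod 4$, and whether they occur at all depends on the parity of $k$; this is precisely the source of the split into the six stated cases.

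The step I expect to be the main obstacle is this final enumeration. The reduction to counting odd $j$ with $\cos(kj\pi/m) > 0$ is clean, but converting it into the closed-form $N$ requires careful bookkeeping of how the arithmetic progression $\{kj \bmod 2m\}$ distributes across the two arcs where cosine is positive, and careful treatment of the boundary indices where cosine vanishes. The residue $m \bmod 4$ and the parity of $k$ interact exactly through those boundary terms, so the bulk of the work is a finite but delicate counting argument rather than any further analysis of the polynomials $F_j$.
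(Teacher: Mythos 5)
Your reduction coincides with the paper's, and the analytic half of your argument is correct and complete: the zeros lie on the $2m$ rays $\theta=j\pi/m$, the count on the $j$-th ray is the number of positive roots of $(-1)^j r^m+2c\alpha r^k-1$ with $\alpha=\cos(kj\pi/m)$, each even ray carries exactly one zero for all $c$ (your Descartes'-rule argument is in fact tidier than the paper's case-by-case calculus in Lemmas \ref{kadenlemma} and \ref{marylemma}), and each odd ray with $\alpha>0$ switches once from $0$ to $2$ zeros as $c$ crosses a threshold. Your conclusion that $N=2\,\#\{\,j \text{ odd}: \cos(kj\pi/m)>0\,\}$ is exactly the quantity the paper evaluates.

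The gap is that you stop at that point. The six-case formula for $N$ \emph{is} the content of the theorem, and you explicitly defer the enumeration of odd $j$ with $\cos(kj\pi/m)>0$ as ``the main obstacle'' without carrying it out; as written, the part of the argument that actually produces the values $m$, $m\pm 1$, $m-2$ is a plan rather than a derivation. The paper closes this gap by writing $\cos(kj\pi/m)=\Re(\omega^j)$ for $\omega=e^{ik\pi/m}$ and proving three counting facts: Proposition \ref{rightside} (a primitive $q$-th root of unity has exactly $2\lfloor (q-1)/4\rfloor+1$ powers with positive real part); Proposition \ref{neighbors} (for $k$ odd, $\omega$ is a primitive $2m$-th root of unity and adjacent roots of unity arise from exponents $j$ of opposite parity, so starting from $\omega^0=1$ the parities alternate around the circle and the right-half-plane count splits between even and odd $j$ in a way controlled by $m \bmod 4$); and Proposition \ref{buddies} (for $k$ even, $\omega$ is a primitive $m$-th root of unity with $m$ odd, and each root of unity is hit by exactly one even and one odd $j\in\{0,\dots,2m-1\}$, so the odd-$j$ count equals the full right-half-plane count). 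These, together with the observation about which rays have $\alpha=0$ (which you did verify correctly), are precisely the ``delicate bookkeeping'' you identified; you would need to supply them, and check at least the representative cases, before the proposal constitutes a proof.
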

Before we continue, we illustrate with an example: 

\begin{example}
Consider the polynomial $p(z) = z^5 + c(z^4 +\overline{z}^4) -1$ corresponding to $m=5$ and $k=4$. Because $5 \equiv 1 \mod 4$ and $4$ is even, by Theorem \ref{theorem1}, the number of zeros grows from $5$ to $5 + (5+1) = 11$. As $c$ changes from $c = 0.1$ to $c = 1$ and eventually to $c = 3$, the number of zeros increases from $5$ to $7$, then to $11$.  See Figure \ref{fig1}.
\end{example}

\begin{figure}[h]
  \subfigure[$c=0.1$]{
    \includegraphics[scale = .5]{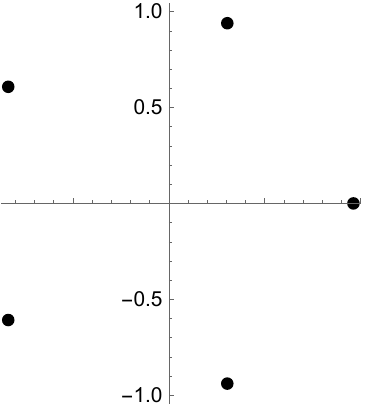}
  }
  \hfill  
  \subfigure[$c=1$]{
    \includegraphics[scale = .65]{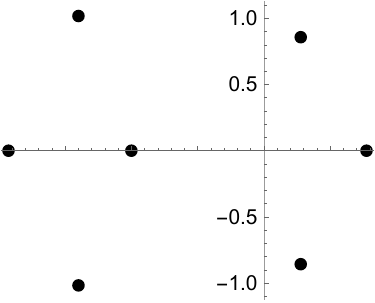}
  }
  \hfill  
  \subfigure[$c=3$]{
    \includegraphics[scale = .55]{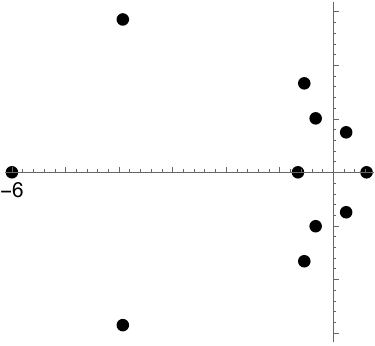}
  }
\caption{\label{fig1} Zeros of $p(z) = z^5 + c(z^4 +\overline{z}^4) - 1$}
\end{figure}

Inspired by the work of Brilleslyper et al., Brooks and Lee \cite{lee} investigated a related family of harmonic trinomials, each having a pole at the origin:
\begin{equation}\label{BrooksnLee}
    f_c(z) =  z^n + \frac{c}{\overline{z}^k} -1
\end{equation}
where $n, k \in \mathbb{N}$ with $n > k$ and $\gcd(n,k) = 1$.  They showed that for this family, the number of zeros decreases from $n+k$ to $n-k$ as $c$ increases through the positive reals. Not surprisingly, their proof strategy was the same as that in Brilleslyper et al. The critical curve separating the sense-preserving and sense-reversing regions is again a circle, but this time, its image under the function is an epicycloid. They determined the winding number of this epicycloid about the origin for all values of $c$ and were therefore able to count the zeros in the sense-reversing region for all values of $c$. 

Just as we added a single term to go from \eqref{Brilleslyper} to \eqref{ourform}, we also investigate a family of harmonic functions related to the family \eqref{BrooksnLee}. However, we write our functions in a way that makes their connection with \eqref{ourform} clearer: 
\begin{equation}\label{SecondForm}
    p(z) = z^m + c\left(z^k + \overline{z}^k \right) -1
\end{equation}
where $m\in \mathbb{N}$, $k \in \mathbb{Z}$ with $k<0$, $m>|k|$, $c \in \mathbb{R_+}$, and $\gcd(m,k)=1$. Because of the additional analytic term, we again have a family of harmonic functions whose critical curve is not a circle. Therefore, once again, a proof using the Harmonic Argument Principle is not viable. However, because the middle two terms are conjugates, once again  $z^m$ is the only non-real term. We can therefore again reduce the complex problem to a real-variable problem. 

In this paper, we prove the following theorem:
\begin{theorem}\label{theorem2}
    Let $p(z) = z^m + c(z^{k} +\overline{z}^{k}) -1$, where $m\in \mathbb{N}$, $k \in \mathbb{Z}$ with $k<0$, $m>|k|$, $c \in \mathbb{R}_+$, and $\gcd(m,k)=1$. As $c$ increases, the number of zeros of $p$ decreases monotonically from a maximum of $M+N$ to a minimum of $M$, where $M$ and $N$ are determined as follows:
    \begin{itemize}
        \item If $m \equiv 0 \mod 4$, $M = m+1$ and $N = m-2$.
        \item If $m \equiv 1 \mod 4$ and $k$ is odd, $M = m-1$ and $N = m+1$.
        \item If $m \equiv 1 \mod{4}$ and k is even, $M = m, N = m + 1$
        \item If $m \equiv 2 \mod 4$, $M=m-1$ and $N=m$.
        \item If $m \equiv 3 \mod 4$ and $k$ is odd, $M=m+1$ and $N=m-1$.
        \item If $m \equiv 3 \mod{4}$ and $k$ is even, $M=m$ and $N=m-1$. 
    \end{itemize}

\end{theorem}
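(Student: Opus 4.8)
The plan is to exploit the observation, emphasized in the introduction, that $z^m$ is the only term of $p$ carrying a nonzero imaginary part. Writing $z = re^{i\theta}$ with $r > 0$ (the origin is excluded since $k<0$), we have $\Im p(z) = r^m\sin(m\theta)$, because $c(z^k+\overline z^k) = 2cr^k\cos(k\theta)$ is real. Hence every zero must satisfy $\sin(m\theta)=0$, i.e.\ it lies on one of the $2m$ rays $\theta = \theta_j := j\pi/m$, $j=0,1,\dots,2m-1$. On such a ray the condition $\Re p = 0$ reads $(-1)^j r^m + 2c\alpha_j r^{k} - 1 = 0$ with $\alpha_j := \cos(k\theta_j) = \cos(|k|j\pi/m)$; multiplying by $r^{|k|}>0$ shows this is equivalent to counting the positive roots of the real polynomial
\[
g_j(r) = (-1)^j r^{m+|k|} - r^{|k|} + 2c\alpha_j .
\]
Thus the whole problem reduces to a family of one-variable root-counting problems indexed by $j$.

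First I would analyze $g_j$ on $(0,\infty)$ through its derivative. For odd $j$ the function is strictly decreasing, so it has exactly one positive root when $\alpha_j>0$ and none when $\alpha_j\le 0$, independently of $c$. For even $j$ there is a unique critical point $r_* = \bigl(|k|/(m+|k|)\bigr)^{1/m}$, which is a global minimum, and a direct computation gives $g_j(r_*) = 2c\alpha_j - \kappa$ for a positive constant $\kappa$ independent of $j$ and $c$. Consequently, when $\alpha_j\le 0$ there is exactly one positive root for every $c$, whereas when $\alpha_j>0$ the sign of $g_j(r_*)$ controls the count: two roots for small $c$, a double root at the threshold $c = \kappa/(2\alpha_j)$, and no root thereafter. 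Since each ray's root count is non-increasing in $c$, the total is monotonically decreasing, and the extreme values can be read off: with $A := \#\{j \text{ even}: \alpha_j>0\}$ and $C := \#\{j \text{ odd}: \alpha_j>0\}$, the maximum (small $c$) is $2A + (m-A) + C = m + A + C$ and the minimum (large $c$) is $M = m - A + C$, so that $N = 2A$.

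The remaining and principal task is the combinatorial evaluation of $A$ and $C$, which is where the dependence on $m \bmod 4$ and the parity of $k$ enters. Here I would study the map $j \mapsto |k|j \bmod 2m$ and the values of $\cos\bigl((|k|j\bmod 2m)\pi/m\bigr)$, using that $\cos(t\pi/m)>0$ precisely for $t \in [0,m/2)\cup(3m/2,2m)$. When $|k|$ is odd we have $\gcd(|k|,2m)=1$, so this map is a parity-preserving bijection of $\mathbb{Z}/2m$, whence $A$ and $C$ equal the number of even (respectively odd) $t\in\{0,\dots,2m-1\}$ with $\cos(t\pi/m)>0$. When $|k|$ is even, the constraint $\gcd(m,k)=1$ forces $m$ odd and $\gcd(|k|,2m)=2$; writing $|k|=2k'$ with $\gcd(k',m)=1$ and invoking the Chinese Remainder Theorem on $\mathbb{Z}/2m \cong \mathbb{Z}/2 \times \mathbb{Z}/m$, each residue $s = k'j \bmod m$ is attained by exactly one even and one odd $j$, forcing $A = C = \#\{s : \cos(2s\pi/m)>0\}$. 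In every case the count collapses to $A = \#\{u\in\{0,\dots,m-1\}: u \in [0,m/4)\cup(3m/4,m)\}$, which I evaluate by a floor/ceiling computation in the four residue classes of $m \bmod 4$, obtaining $A = m/2-1,\ (m+1)/2,\ m/2,\ (m-1)/2$; combining these with the corresponding values of $C$ then yields the six tabulated pairs $(M,N)$.

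The main obstacle is precisely this number-theoretic bookkeeping: correctly tracking parities under multiplication by $|k|$ and disposing of the boundary rays where $\alpha_j=0$. A short check shows these occur only when $m$ is even, exactly at $j=m/2$ and $j=3m/2$, and the analysis of $g_j$ assigns each of them one positive root for all $c$ (they are even-$j$ rays when $m\equiv 0 \bmod 4$ and odd-$j$ rays when $m\equiv 2 \bmod 4$), so they contribute only to $M$ and never to $N$. Verifying that these degenerate rays are counted consistently, and that no off-axis zeros are missed, is the delicate part that the real-variable reduction must handle with care.
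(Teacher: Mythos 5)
Your proposal is correct and follows essentially the same strategy as the paper: confine the zeros to the $2m$ rays $\theta = j\pi/m$, count positive roots of a one-variable real function on each ray via its unique interior critical point, and then tally rays by the parity of $j$ and the sign of $\cos(kj\pi/m)$, where your multiplication-by-$|k|$ and Chinese Remainder Theorem bookkeeping is equivalent to the paper's Propositions \ref{neighbors} and \ref{buddies}. Clearing the pole by passing to $g_j(r)=(-1)^j r^{m+|k|}-r^{|k|}+2c\alpha_j$ is a pleasant streamlining---it makes the critical point and the constant $\kappa$ independent of $c$ and $j$---and your final tallies $M=m-A+C$, $N=2A$ do reproduce all six cases of the table, so the substance matches the paper's argument.
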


We illustrate with an example:
\begin{example}
Consider the function $p(z) = z^{5} + c(z^{-4} +\overline{z}^{-4}) -1$ corresponding to $m=5$ and $k=-4$. Because $5 \equiv 1 \mod 4$ and $-4$ is even, by Theorem \ref{theorem2}, $M = 5$ and $N = 6$, and so the number of zeros decreases from $5+6 = 11$ to $5$. When $c$ changes from $c = 0.1$ to $c = 0.2$ and eventually to $c = 1$, the number of zeros decreases from $11$ to $9$, then to $5$. See Figure \ref{fig2}. 
\end{example}

\begin{figure}[H]
  \subfigure[$c=0.1$]{
    \includegraphics[scale = 0.6]{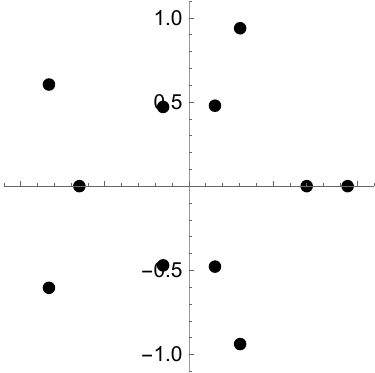}
  }
  \hfill  
  \subfigure[$c=0.2$]{
    \includegraphics[scale = 0.6]{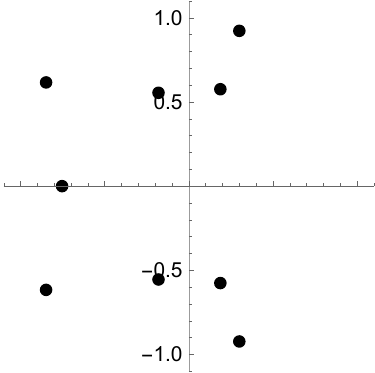}
  }
  \hfill  
  \subfigure[$c=1$]{
    \includegraphics[scale = 0.6]{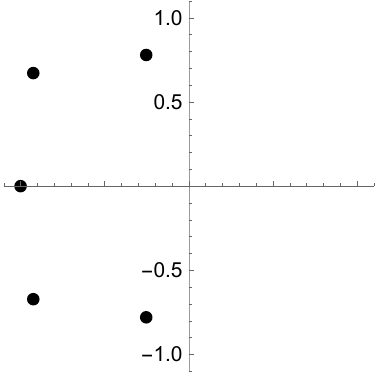}
  }
\caption{ \label{fig2} Zeros of $p(z)=z^5 + c(z^{-4} +\overline{z}^{-4}) - 1$ }
\end{figure}

Our paper is structured as follows: In Section 2 we show that the zeros of \eqref{ourform} and \eqref{SecondForm} all lie on one of $2m$ rays in the complex plane, and we show how to reduce the question of counting zeros on each ray to a question of counting positive real zeros of a certain real-valued function associated with the ray. These real-valued functions take one of six forms, and in Section 3, we prove lemmas that count the positive real zeros for each.  Lastly, in Section 4 we count the rays giving rise to each of these cases in order to prove Theorems \ref{theorem1} and \ref{theorem2}.

\section{Location of Zeros}

The two families \eqref{ourform} and \eqref{SecondForm} can be combined into a single family
\begin{equation}\label{combinedfam}
    p(z) = z^m+c (z^k +\overline{z}^k)-1, \quad m \in \mathbb{N}, \; k \in \mathbb{Z} \setminus\{0\}
\end{equation}
where $c \in \mathbb{R}_+$, $m>|k|$, and $\gcd(m,k)=1$. For some parts of our setup, the sign of $k$ does not matter whereas for other parts it does.  

As our first step, for any non-zero integer $k$, we write $z$ in polar form and separate $p$ into real and imaginary parts.
\begin{align*}
p(z) &= z^m+c(z^k+\overline{z}^k)-1 \\
&=  (re^{i\theta})^m+c\left((re^{i\theta})^k+(re^{-i\theta})^k\right)-1 \\
&= r^m(\cos{m\theta}+i\sin{m\theta})+2cr^k\cos{k\theta}-1. 
\end{align*}
Thus, 
\[
\Re p(re^{i\theta})  = r^m\cos{m\theta} + 2cr^k\cos k\theta - 1
\]
and
\[
\Im p(re^{i\theta})  = r^m\sin m\theta .
\]


Because a complex-valued function is 0 if and only if both the real and imaginary parts are 0 and because $z=0$ is not a zero of $p$, the next lemma follows immediately 

\begin{lemma}\label{lemma1}
Every zero of $p(z) = z^m+c(z^k+\overline{z}^k)-1$, $m \in \mathbb{N}$, $k \in \mathbb{Z} \setminus\{0\}$ lies on one of the $2m$ rays in the complex plane with angle $\theta=\frac{j\pi}{m} \text{ for } j=0,1,2,...,2m-1$. 
\end{lemma}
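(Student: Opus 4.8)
The plan is to read the conclusion directly off the real–imaginary decomposition already computed in the display preceding the statement, so that no new machinery is required. Suppose $z = re^{i\theta}$ is a zero of $p$. The first thing I would establish is that $z \neq 0$: for $k > 0$ we have $p(0) = -1 \neq 0$, while for $k < 0$ the point $z = 0$ is a pole and hence lies outside the domain of $p$. Either way, every zero satisfies $r > 0$, which is the one fact I will need about the modulus.

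Next I would invoke the elementary fact that a complex number vanishes if and only if both its real and imaginary parts vanish, and apply this to the zero $z = re^{i\theta}$. Using the formula $\Im p(re^{i\theta}) = r^m \sin m\theta$, the vanishing of the imaginary part forces $r^m \sin m\theta = 0$. Since $r > 0$ implies $r^m > 0$, I may divide it out and conclude $\sin m\theta = 0$. The key structural point, which is what makes the lemma clean, is that this conclusion involves neither $c$ nor the real part of $p$: the argument is pinned down by the imaginary part alone.

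Finally I would solve the trigonometric equation. The condition $\sin m\theta = 0$ holds precisely when $m\theta$ is an integer multiple of $\pi$, that is, $\theta = \frac{j\pi}{m}$ for some integer $j$. Normalizing the argument to lie in $[0, 2\pi)$ restricts $j$ to the values $0, 1, \ldots, 2m-1$, producing exactly the $2m$ rays in the statement. There is no genuine obstacle here; the only points deserving a word of care are the exclusion of the origin (handled separately for the two signs of $k$) and the remark that only the imaginary part is used at this stage. The real part $\Re p(re^{i\theta}) = r^m \cos m\theta + 2cr^k \cos k\theta - 1$ plays no role in locating the rays and will enter only later, when one counts how many zeros actually occur on each admissible ray.
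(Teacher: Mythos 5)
Your proposal is correct and follows exactly the paper's route: the paper derives $\Im p(re^{i\theta}) = r^m\sin m\theta$ just before the lemma and states that the result follows immediately from $z \neq 0$ together with the fact that a complex number vanishes iff its real and imaginary parts both vanish. You have simply written out the same one-line argument in full detail, including the harmless extra care about why $z = 0$ is excluded for each sign of $k$.
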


We illustrate this lemma by showing again the plots of the zeros of the function from Example 1, this time with the rays included. 

\begin{figure}[H]
  \subfigure[$c=0.1$]{
    \includegraphics[scale = 0.5]{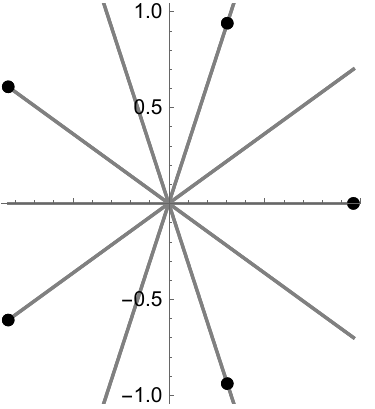}
  }
  \hfill  
  \subfigure[$c=1$]{
    \includegraphics[scale = 0.65]{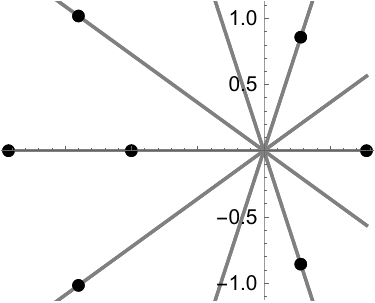}
  }
  \hfill  
  \subfigure[$c=3$]{
    \includegraphics[scale = 0.55]{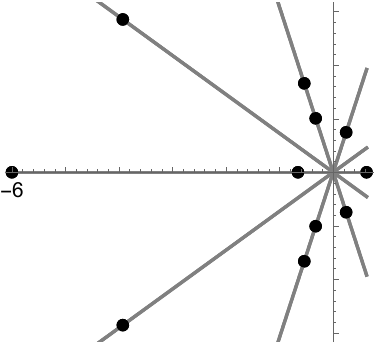}
  }
\caption{\label{fig3} Zeros and rays of $p(z) = z^5 + c(z^4 +\overline{z}^4) - 1$}
\end{figure}



By Lemma \ref{lemma1}, we see that if $z = re^{i\theta}$ is a zero of $p$, then 
\begin{align*}
\Re p(z) 
 &= r^m\cos\left({\frac{mj\pi}{m}}\right) + 2cr^k\cos\left({\frac{kj\pi}{m}}\right) - 1 \\
 &=  (-1)^j r^m + 2c\cos\left({\frac{kj\pi}{m}}\right) r^k - 1\\
 &:= f_j(r).
\end{align*}
To simplify notation, let $\alpha = \cos({\frac{kj\pi}{m}})$. 

Because $r$ is the modulus of a non-zero complex number, we see that counting the zeros of $p$ on the $j$-th ray is equivalent to counting the positive real zeros of $f_j$. The number of zeros of $f_j$ in turn depends on the sign of $k$, the parity of $j$, and whether $\alpha$ is positive, negative, or zero.  In the next section, we prove lemmas on the number of positive real zeros of $f_j$ in each case. Later, we count the $j \in \{0,1,2,...,2m-1\}$ giving rise to each case. We then use these two pieces of information to prove the theorems. 

\section{Positive real zeros of $f_j$}

For all of the lemmas to follow, 
\begin{equation}\label{realfunctions}
    f(r) = (-1)^j r^m +2 c \alpha r^k - 1
\end{equation}
for $m \in \mathbb{N}$, $k \in \mathbb{Z} \setminus \{0\}$, $m>|k|$, $c \in \mathbb{R}_+$, $j \in \{0,\ldots, 2m-1\}$, and $|\alpha| \leq 1$. 

The cases in which $\alpha = 0$ are easy and independent of $k$. We summarize the results here for completeness. 

\begin{lemma}\label{degposlemma}
Let $f$ be as in \eqref{realfunctions} with $j$ even and $\alpha =0$. Then for all $c>0$, $f$ has exactly one positive real zero. 
\end{lemma}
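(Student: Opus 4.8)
The plan is to observe that the two hypotheses collapse $f$ to a one-term-plus-constant function, after which the count is immediate. Since $j$ is even, $(-1)^j = 1$, and since $\alpha = 0$, the middle term $2c\alpha r^k$ vanishes identically for \emph{every} $c > 0$. Hence on $(0,\infty)$ we simply have $f(r) = r^m - 1$, with no remaining dependence on $c$ or $k$. Recording this reduction is really the entire conceptual step.

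Next I would count the positive real zeros of $r^m - 1$ directly. The equation $r^m - 1 = 0$ with $r > 0$ is equivalent to $r^m = 1$, and since $t \mapsto t^m$ is strictly increasing on $(0,\infty)$ and attains the value $1$ exactly once, the unique positive solution is $r = 1$. If one prefers a softer argument that generalizes to the later, less trivial lemmas, I would instead combine monotonicity with the Intermediate Value Theorem: $f(r) \to -1$ as $r \to 0^+$, $f(r) \to +\infty$ as $r \to \infty$, and $f'(r) = m r^{m-1} > 0$ on $(0,\infty)$, so $f$ is strictly increasing and therefore crosses zero exactly once.

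There is essentially no obstacle here, and I would flag that explicitly: the only substantive remark is that the conclusion holds \emph{uniformly} in $c > 0$, which is automatic because $c$ enters $f$ only through the product $c\alpha = 0$. The value of the lemma is thus mainly bookkeeping — it isolates the degenerate case $\alpha = 0$ (with $j$ even, so $(-1)^j=1$) so that it can be folded cleanly into the ray-counting argument of Section 4, where each case of the main theorems is assembled from the per-ray zero counts supplied by lemmas of this kind.
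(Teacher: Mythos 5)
Your proposal is correct and matches the paper's proof, which likewise reduces $f$ to $r^m-1$ and notes that $r=1$ is its only positive zero. The extra remarks about uniformity in $c$ and the alternative IVT/monotonicity argument are fine but not needed.
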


\begin{proof}
Here, $f(r) = r^m - 1$. Clearly the only positive real zero is at $r=1$. 
\end{proof}

\begin{lemma}\label{degneglemma}
Let $f$ be as in \eqref{realfunctions} with $j$ odd and $\alpha = 0$. Then for all $c>0$, $f$ has no positive real zeros. 
\end{lemma}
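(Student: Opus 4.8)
The plan is to mirror the computation used for Lemma \ref{degposlemma}, where the $j$-even, $\alpha = 0$ case collapsed to $f(r) = r^m - 1$. Here the only change is the sign coming from the parity of $j$: since $j$ is odd we have $(-1)^j = -1$, and since $\alpha = 0$ the entire middle term $2c\alpha r^k$ vanishes regardless of the sign of $k$ or the value of $c$. Substituting these two facts into \eqref{realfunctions} should immediately reduce $f$ to the single expression $f(r) = -r^m - 1$.

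From that point I would simply argue a sign condition rather than counting roots. For any $r > 0$ and any $m \in \mathbb{N}$ we have $r^m > 0$, hence $-r^m - 1 < -1 < 0$. Thus $f(r)$ is strictly negative on all of $(0,\infty)$ and can have no zero there. This establishes the claim for every $c > 0$, which is consistent with the statement that the conclusion is independent of $c$ (as expected, since $c$ only multiplied the term that dropped out).

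I do not anticipate a genuine obstacle in this lemma: once the reduction $f(r) = -r^m - 1$ is written down, the absence of positive real zeros is immediate from positivity of $r^m$. The only thing worth stating explicitly, for completeness and to match the framing of the surrounding lemmas, is that neither the sign of $k$ nor the magnitude of $c$ plays any role once $\alpha = 0$, so the single line of reasoning covers all admissible parameters at once.
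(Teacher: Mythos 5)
Your proposal is correct and matches the paper's proof exactly: both reduce $f$ to $-r^m - 1$ and note it is strictly negative for $r>0$. Your version just spells out the sign argument slightly more explicitly.
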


\begin{proof}
Here $f(r) = -r^m -1$, which clearly has no postive real zeros. 
\end{proof}

For the other four cases, the sign of $k$ matters, and so we consider these lemmas in two subsections. 

\subsection{Positive $k$}

\begin{lemma}\label{kadenlemma}
Let $f$ be as in \eqref{realfunctions} with $k>0$, $j$ even, and $\alpha >0$. Then for all $c>0$, $f$ has exactly one positive real zero. 
\end{lemma}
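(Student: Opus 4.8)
The plan is to exploit the fact that all the nonconstant terms of $f$ carry a positive sign, which forces $f$ to be strictly increasing on $(0,\infty)$; a strictly increasing function that starts below zero and grows without bound has exactly one positive zero. First I would record that because $j$ is even we have $(-1)^j = 1$, so the function under consideration is
\[
f(r) = r^m + 2c\alpha r^k - 1,
\]
and that under the hypotheses $k>0$, $m>k$, $c>0$, and $\alpha>0$, each of the coefficients $1$ (on $r^m$) and $2c\alpha$ (on $r^k$) is strictly positive.

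Next I would establish existence of a positive zero by the Intermediate Value Theorem: since $f$ is continuous on $[0,\infty)$ with $f(0) = -1 < 0$ and $f(r)\to\infty$ as $r\to\infty$ (the leading term $r^m$ dominates), there is at least one $r_0>0$ with $f(r_0)=0$. I would then establish uniqueness by differentiating,
\[
f'(r) = m\,r^{m-1} + 2c\alpha k\, r^{k-1} = r^{k-1}\bigl(m\,r^{m-k} + 2c\alpha k\bigr),
\]
and observing that for $r>0$ every factor on the right is strictly positive (here I use $k>0$ so $r^{k-1}>0$, and $m>k$ together with $c,\alpha>0$ so the term in parentheses is positive). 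Hence $f'(r)>0$ for all $r>0$, so $f$ is strictly increasing on $(0,\infty)$ and can vanish at most once.

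Combining existence and strict monotonicity gives exactly one positive real zero, completing the argument. I do not expect any genuine obstacle in this case: the sign conditions on $j$, $k$, and $\alpha$ are precisely what is needed to rule out sign changes in $f'$, so the monotonicity is immediate and there are no competing terms to analyze. The only point to state carefully is that the hypothesis $k>0$ (as opposed to the $k<0$ regime treated in the other subsection) is what keeps the exponent $k-1$ from producing a negative power of $r$ that could spoil the sign of $f'$.
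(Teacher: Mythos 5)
Your argument is correct and matches the paper's proof essentially verbatim: existence via the Intermediate Value Theorem from $f(0)=-1$ and $f(r)\to\infty$, and uniqueness from $f'(r)=mr^{m-1}+2ck\alpha r^{k-1}>0$ on $(0,\infty)$. The factorization of $f'$ is a harmless extra step; no gaps.
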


\begin{proof}
    Because $f(0) = -1$ and $\lim_{r\to\infty} f(r) = \infty$, by the Intermediate Value Theorem, $f$ has at least one positive real zero. Because $f'(r) = mr^{m-1}+ 2c k\alpha  r^{k-1} > 0$, by Rolle's Theorem, there can not be two positive real zeros. 
    \end{proof}

\begin{lemma}\label{dallinlemma}
Let $f$ be as in \eqref{realfunctions} with $k>0$, $j$ odd, and $\alpha <0$. Then for all $c>0$, $f$ has no positive real zeros. 
\end{lemma}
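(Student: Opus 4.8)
The plan is to show that under these hypotheses every term of $f(r)$ is strictly negative for positive $r$, so $f$ never vanishes on $(0,\infty)$. This mirrors the strategy of Lemma \ref{degneglemma}, where the sign of a degenerate term was exploited, except that here I keep the middle term and track its sign rather than dropping it.

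First I would use the parity of $j$. Since $j$ is odd, $(-1)^j = -1$, so from \eqref{realfunctions} we have
\[
f(r) = -r^m + 2c\alpha r^k - 1.
\]
Next I would analyze the sign of each of the three summands on the open ray $r>0$. The leading term $-r^m$ is strictly negative because $r^m>0$. The constant term $-1$ is of course negative. The only term whose sign is not immediate is the middle term $2c\alpha r^k$; here I would invoke the three standing hypotheses separately: $c>0$ by assumption on the parameter, $\alpha<0$ by hypothesis of this lemma, and $r^k>0$ because $r>0$ and $k$ is a (positive, though the sign of $k$ is irrelevant to this particular point) integer. Multiplying these gives $2c\alpha r^k<0$.

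Having established that all three summands are strictly negative for every $r>0$, I would conclude that $f(r)<0$ on all of $(0,\infty)$, and hence $f$ has no positive real zeros, as claimed. There is no genuine obstacle in this case: the result is a direct sign computation, and the only point requiring any care is to confirm that the three hypotheses $c>0$, $\alpha<0$, and $r^k>0$ together force the middle term to be negative, so that no cancellation among the terms can occur.
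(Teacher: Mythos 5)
Your proposal is correct and matches the paper's argument: both observe that with $j$ odd and $\alpha<0$, all three terms of $f(r)=-r^m+2c\alpha r^k-1$ are strictly negative for $r>0$ (the paper phrases this as $f(r)<-1$), so $f$ cannot vanish on the positive reals. No further comment is needed.
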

    \begin{proof}
    Because $\alpha < 0$, it is clear that for all positive $r$, $f(r) < -1$. Thus $f$ has no positive real zeros. 
    \end{proof}

\begin{lemma}\label{marylemma}
Let $f$ be as in \eqref{realfunctions} with $k>0$, $j$ even, and $\alpha <0$. Then for all $c>0$, $f$ has exactly one positive real zero. 
\end{lemma}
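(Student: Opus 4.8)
The plan is to mirror the existence-plus-uniqueness structure of Lemma \ref{kadenlemma}, but since the middle coefficient is now negative the derivative no longer has constant sign, so uniqueness requires a little more care. Writing $j$ even gives $(-1)^j = 1$, so $f(r) = r^m + 2c\alpha r^k - 1$ with $2c\alpha < 0$. Existence is immediate: $f(0) = -1 < 0$ and $\lim_{r\to\infty} f(r) = +\infty$, so the Intermediate Value Theorem supplies at least one positive real zero.

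For uniqueness, I would study the derivative $f'(r) = m r^{m-1} + 2ck\alpha r^{k-1} = r^{k-1}\bigl(m r^{m-k} + 2ck\alpha\bigr)$. For $r > 0$ the factor $r^{k-1}$ is positive, so the sign of $f'$ is governed by $g(r) = m r^{m-k} + 2ck\alpha$. Since $m > k > 0$, the exponent $m-k$ is a positive integer, so $g$ is strictly increasing on $(0,\infty)$, with $g(0) = 2ck\alpha < 0$ and $g(r) \to +\infty$. Hence $g$ has a unique positive zero $r_0$, with $f' < 0$ on $(0, r_0)$ and $f' > 0$ on $(r_0, \infty)$; that is, $f$ decreases strictly and then increases strictly, attaining a unique global minimum at $r_0$.

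To finish, I would combine this shape information with the sign of $f$ at the endpoints. On $(0, r_0)$ the function is decreasing and starts from $f(0) = -1 < 0$, so $f(r) < 0$ for all $r \in (0, r_0]$; in particular $f$ has no zero there. On $(r_0, \infty)$, $f$ increases strictly from the negative value $f(r_0)$ to $+\infty$, so it crosses zero exactly once. Therefore $f$ has exactly one positive real zero. The only subtlety — and the one place the argument differs from Lemma \ref{kadenlemma} — is ruling out a zero on the decreasing branch, which is handled cleanly by noting $f(0) = -1$ together with monotonicity. As an even shorter alternative, since $f(r) = r^m + 2c\alpha r^k - 1$ is a genuine polynomial with coefficient sign pattern $(+,-,-)$, Descartes' rule of signs yields exactly one sign change and hence exactly one positive real root; I would likely present the calculus argument to match the surrounding style and mention Descartes' rule only as a remark.
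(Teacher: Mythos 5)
Your proof is correct and follows essentially the same route as the paper's: the Intermediate Value Theorem for existence, followed by factoring $f'(r) = r^{k-1}(mr^{m-k} + 2ck\alpha)$ to show $f$ decreases to a unique global minimum at $r_0$ and then increases to infinity. Your added care in ruling out a zero on the decreasing branch (and the Descartes' rule remark) is a nice touch but does not change the argument.
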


    \begin{proof}
    Because $f(0) = -1$ and $\lim_{r\to\infty} f(r) = \infty$, by the Intermediate Value Theorem, $f$ has at least one positive real zero.  Observe,
    \[
    f'(r) = m r^{m-1} + 2c k \alpha r^{k-1} = r^{k-1}( m r^{m-k} +2c k \alpha).
    \]
    For positive $r$, the first factor is always positive. Because $\alpha < 0$, the second factor changes sign from negative to positive at the positive real number $r_0=\left(-\frac{2ck\alpha}{m}\right)^\frac{1}{m-k}$. Thus $f$ starts off negative, decreases to its global minimum at $r_0$, then increases monotonically to infinity as $r$ tends to infinity. Therefore, $f$ has exactly one positive real zero.   
    \end{proof}

\begin{lemma}\label{elilemma}
Let $f$ be as in \eqref{realfunctions} with $k>0$, $j$ odd, and $\alpha >0$.  There exists $c_0 > 0$ such that, for $0 < c < c_0$, $f$ has no positive real zeros and for $c > c_0$, $f$ has exactly two positive real zeros.
\end{lemma}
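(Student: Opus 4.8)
The plan is to exploit that for odd $j$ we have $(-1)^j = -1$, so $f(r) = -r^m + 2c\alpha r^k - 1$ with $\alpha > 0$. Since $f(0) = -1 < 0$ and the term $-r^m$ dominates for large $r$ (because $m > k$), $f$ tends to $-\infty$ at both ends of $(0,\infty)$. Thus the entire question reduces to understanding the single interior maximum of $f$ and tracking how its height depends on $c$.

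First I would locate and classify the critical points. Writing $f'(r) = r^{k-1}\left(-m r^{m-k} + 2c\alpha k\right)$, the factor $r^{k-1}$ is positive on $(0,\infty)$ while the bracket is strictly decreasing in $r$ and vanishes exactly once, at $r_0 = \left(\frac{2c\alpha k}{m}\right)^{1/(m-k)}$. Hence $f$ increases on $(0,r_0)$ and decreases on $(r_0,\infty)$, so $r_0$ is the unique global maximizer, and the number of positive real zeros is governed entirely by the sign of the maximum value $M(c) := f(r_0)$: no zeros if $M(c) < 0$; exactly two if $M(c) > 0$, one on each side of $r_0$ by the Intermediate Value Theorem; and a single double zero if $M(c) = 0$.

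Next I would show $M(c)$ is strictly increasing and sweeps from negative to positive values. Rather than differentiate an explicit formula, I would write $f(r;c)$ for $f$ regarded as a function of $r$ with parameter $c$ and use the fact that for each fixed $r > 0$ the map $c \mapsto f(r;c)$ is strictly increasing, since its $c$-derivative is $2\alpha r^k > 0$. Then for $c_1 < c_2$ one has $M(c_1) = f(r_0(c_1);c_1) < f(r_0(c_1);c_2) \le M(c_2)$, giving strict monotonicity. To pin down the range, I would substitute $r_0^{m-k} = \frac{2c\alpha k}{m}$ into $f$ to obtain the closed form
\[
M(c) = \frac{2(m-k)}{m}\,c\alpha\, r_0^{k} - 1 = \frac{2(m-k)}{m}\,\alpha\left(\frac{2\alpha k}{m}\right)^{k/(m-k)} c^{\,m/(m-k)} - 1,
\]
from which $M(c) \to -1$ as $c \to 0^+$ and $M(c) \to \infty$ as $c \to \infty$, because the exponent $m/(m-k)$ is positive.

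Finally, combining continuity, strict monotonicity, and these two limits, the Intermediate Value Theorem yields a unique $c_0 > 0$ with $M(c_0) = 0$; then $M(c) < 0$ for $0 < c < c_0$ and $M(c) > 0$ for $c > c_0$, and the zero count from the second step gives the claim. The main obstacle I anticipate is not the local analysis, which is routine, but arguing cleanly that the threshold $c_0$ is \emph{unique}; the envelope-style monotonicity of $M(c)$ is what makes this transparent and lets me avoid a messy direct differentiation of the maximum value.
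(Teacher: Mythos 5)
Your proposal is correct and follows essentially the same route as the paper: factor $f'$, identify the unique global maximum at $r_0=\left(\frac{2c\alpha k}{m}\right)^{1/(m-k)}$, and show that $f(r_0)$ has the form (positive constant)$\cdot c^{m/(m-k)}-1$, so that the sign of the maximum, and hence the zero count, flips at a single threshold $c_0$. Your simplification $f(r_0)=\frac{2(m-k)}{m}c\alpha r_0^{k}-1$ makes the positivity of the coefficient slightly more transparent than the paper's comparison of the exponents $\frac{m}{m-k}$ and $\frac{k}{m-k}$, and the envelope-monotonicity argument is a harmless (if redundant, given the closed form) addition.
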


    \begin{proof}
    By a calculation nearly identical to the above,
    \[
    f'(r) = r^{k-1}\left(-m r^{m-k}+2 c k \alpha  \right)
    \]
     with $k,\alpha>0$.  Thus, there is exactly one positive critical point at $r_0 =\left({\frac{2c k\alpha }{m}}\right)^{\frac{1}{m-k}}$ at which $f'$ changes from positive to negative. Therefore, $r_0$ is the location of both the only local extreme and the global maximum of $f$.  We claim there exists $c_0$ such that when $0 < c < c_0$, $f(r_0) < 0$ and when $c > c_0$, $f(r_0) > 0$. Observe,
    \begin{align*}
    f(r_0) &= -\left( \frac{2c k\alpha }{m} \right)   ^ {\frac{m}{m-k}}  + 2c\alpha \left( \frac{2c k\alpha }{m}   \right) ^ \frac{k}{m-k} - 1 \\
    &= \left(2c\alpha\right)^{\frac{m}{m-k}} \left( -\left(\frac{k}{m}\right) ^ {\frac{m}{m-k}} +  \left( \frac{k}{m} \right) ^ {\frac{k}{m-k}} \right) -1\\
    &:=c^{\frac{m}{m-k}}\beta - 1
    \end{align*}
    where 
    \[
    \beta = \left(2\alpha\right)^{\frac{m}{m-k}} \left( -\left(\frac{k}{m}\right) ^ {\frac{m}{m-k}} +  \left( \frac{k}{m} \right)^{\frac{k}{m-k}} \right).
    \]
    Because $\alpha >0$, $\left(2c\alpha\right)^{\frac{m}{m-k}} > 0$. Also, $\frac{k}{m} < 1$ and $\frac{m}{m-k} > \frac{k}{m-k}$. Hence $\beta >0$.  Thus, if $c < \beta^{-\frac{m-k}{m}}$, $f(r_0) < 0$ and if  $c > \beta^{-\frac{m-k}{m}}$, $f(r_0) > 0$. 
    
    Because $f(0)=-1$ and $\lim_{r \to \infty} f(r) = -\infty$, taking $c_0=\beta^{-\frac{m-k}{m}}$ it follows that for $0<c<c_0$, $f$ has no positive real zeros and for $c>c_0$, $f$ has exactly two positive real zeros. 
    \end{proof}

\subsection{Negative $k$}

\begin{lemma}\label{Lemma: kneg, j odd, alpha neg} 
  Let $f$ be as in \eqref{realfunctions} with $k<0$, $j$ odd, and $\alpha <0$. Then for all $c>0$, $f$ has no positive real zeros. 
\end{lemma}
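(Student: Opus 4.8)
The plan is to mimic the proof of Lemma \ref{dallinlemma}, since this is the negative-$k$ analogue of that case and the sign structure is essentially identical. First I would use that $j$ is odd, so $(-1)^j = -1$, and rewrite
\[
f(r) = -r^m + 2c\alpha r^k - 1.
\]
The key observation is that each of the three terms is strictly negative for every $r > 0$. The term $-r^m$ is negative because $r^m > 0$; the constant term $-1$ is obviously negative; and the middle term $2c\alpha r^k$ is negative because $c > 0$, $\alpha < 0$, and $r^k > 0$ for positive $r$.

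The one point that deserves a word of care is the sign of $r^k$ when $k < 0$. Here $r^k = 1/r^{|k|}$, which is still strictly positive for all $r > 0$, so the sign of $2c\alpha r^k$ is governed entirely by the sign of $\alpha$, exactly as in the positive-$k$ case. Thus no new analysis is needed to handle the pole at the origin: the asymptotic behavior as $r \to 0^+$ (where $r^k \to +\infty$) only makes the middle term more negative, and as $r \to \infty$ the term $-r^m$ dominates toward $-\infty$.

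Putting these together, I would conclude that for all $r > 0$,
\[
f(r) = -r^m + 2c\alpha r^k - 1 < -1 < 0,
\]
so $f$ is strictly negative on $(0,\infty)$ and therefore has no positive real zeros. I do not anticipate any genuine obstacle in this lemma: unlike the cases where a sign change forces a critical-point analysis (as in Lemmas \ref{marylemma} and \ref{elilemma}), here all terms cooperate and a direct sign estimate suffices, with the only subtlety being the harmless verification that $r^k > 0$ persists for negative exponents.
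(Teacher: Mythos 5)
Your proof is correct and is essentially the same as the paper's: both argue that since $j$ is odd, $\alpha<0$, and $r^k>0$ for $r>0$, every term of $f(r)=-r^m+2c\alpha r^k-1$ is negative, so $f(r)<-1$ on $(0,\infty)$. Your added remark that $r^k=1/r^{|k|}>0$ for negative $k$ is a harmless elaboration of the same one-line sign estimate.
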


  \begin{proof}
    Because $\alpha < 0 $, for all $r > 0$, $f(r) < -1$ and so $f$ has no positive real zeros.
    \end{proof}

\begin{lemma}\label{lemma: k neg, j even, alpha neg}
  Let $f$ be as in \eqref{realfunctions} with $k<0$, $j$ even, and $\alpha <0$. Then for all $c>0$, $f$ has exactly one positive real zero. 
\end{lemma}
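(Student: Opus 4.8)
The plan is to proceed exactly as in Lemma \ref{kadenlemma}, combining the Intermediate Value Theorem for existence with strict monotonicity for uniqueness; the sign hypotheses $k<0$ and $\alpha<0$ conspire to make both terms of $f'$ positive, which is the crux.

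First I would use that $j$ is even to write $(-1)^j = 1$, so that
\[
f(r) = r^m + 2c\alpha r^k - 1.
\]
Next I would examine the boundary behavior on $(0,\infty)$. Since $k<0$, the term $r^k = r^{-|k|}$ blows up as $r\to 0^+$, and because $\alpha<0$ and $c>0$ its coefficient $2c\alpha$ is negative; hence $f(r)\to -\infty$ as $r\to 0^+$. As $r\to\infty$, the leading term $r^m$ dominates while $r^k\to 0$, so $f(r)\to +\infty$. By the Intermediate Value Theorem, $f$ has at least one positive real zero.

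For uniqueness I would compute
\[
f'(r) = m r^{m-1} + 2c\alpha k\, r^{k-1}.
\]
For $r>0$ the factor $r^{k-1}$ is positive, and the product $\alpha k$ is positive because $\alpha$ and $k$ are both negative; together with $c>0$ this makes the second term positive, while $m r^{m-1}>0$ makes the first term positive. Thus $f'(r)>0$ for all $r>0$, so $f$ is strictly increasing on $(0,\infty)$. Equivalently, by Rolle's Theorem there cannot be two positive real zeros, and so $f$ has exactly one.

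I do not anticipate a genuine obstacle here: the only point requiring care is the limit as $r\to 0^+$, where the negative power $r^k$ must be recognized as the dominant (and, thanks to $\alpha<0$, negatively signed) term driving $f$ to $-\infty$. Once the sign of $\alpha k$ is pinned down, the derivative argument is immediate and mirrors the positive-$k$ case.
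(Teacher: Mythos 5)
Your argument is correct and is essentially identical to the paper's proof: both establish existence via the Intermediate Value Theorem using $f(r)\to-\infty$ as $r\to 0^+$ and $f(r)\to+\infty$ as $r\to\infty$, and both establish uniqueness by noting that $k\alpha>0$ makes $f'(r)>0$ on $(0,\infty)$. No gaps.
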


    \begin{proof}
    Because $\lim_{r \to 0^+} f(r) = -\infty$ and $\lim_{r \to \infty} f(r) = \infty$, by the Intermediate Value Theorem, $f$ has at least one positive real zero. Furthermore, because $k<0$ and $\alpha <0$, $f'(r) = mr^{m-1}+2 c k \alpha r^{k-1}\ > 0$, and so by Rolle's Theorem, $f$ can not have two positive real zeros. 
    \end{proof}

\begin{lemma}\label{lemma: k neg, j odd, alpha pos}
  Let $f$ be as in \eqref{realfunctions} with $k<0$, $j$ odd, and $\alpha > 0$. Then for all $c>0$, $f$ has exactly one positive real zero. 
\end{lemma}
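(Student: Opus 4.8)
The plan is to mirror the structure of the preceding negative-$k$ lemmas (Lemmas \ref{Lemma: kneg, j odd, alpha neg} and \ref{lemma: k neg, j even, alpha neg}): establish existence of a positive zero by the Intermediate Value Theorem, and then rule out a second zero by showing that $f$ is strictly monotone on $(0,\infty)$. First I would substitute the hypothesis that $j$ is odd, so that $(-1)^j = -1$ and the function takes the form
\[
f(r) = -r^m + 2c\alpha r^k - 1,
\]
with $k < 0$, $\alpha > 0$, and $c > 0$.

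For existence, I would examine the two endpoint behaviors. Since $k<0$, writing $r^k = r^{-|k|}$ shows that $r^k \to +\infty$ as $r \to 0^+$; because the coefficient $2c\alpha$ is positive, the middle term dominates and $\lim_{r\to 0^+} f(r) = +\infty$. At the other end, $-r^m \to -\infty$ dominates the vanishing middle term, so $\lim_{r\to\infty} f(r) = -\infty$. The Intermediate Value Theorem then yields at least one positive real zero.

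For uniqueness, I would compute
\[
f'(r) = -m r^{m-1} + 2c\alpha k r^{k-1}.
\]
The crux is a sign check: for $r>0$ the first term is negative (as $m>0$), and the second term is also negative, because $k<0$ while $c$, $\alpha$, and $r^{k-1}$ are all positive. Hence $f'(r) < 0$ for every $r>0$, so $f$ is strictly decreasing; by Rolle's Theorem it cannot have two positive zeros. Combined with the existence argument, this gives exactly one.

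I do not expect a serious obstacle here, as this is the gentlest of the negative-$k$ cases; the only point requiring care is the sign bookkeeping in $f'$. It is instructive to contrast this with Lemma \ref{elilemma}, the positive-$k$ counterpart having the same parity of $j$ and the same sign of $\alpha$: there $k>0$ makes the second term of $f'$ positive, producing a single interior maximum and thus the possibility of two zeros. Flipping the sign of $k$ is precisely what collapses that behavior into strict monotonicity and forces a unique zero.
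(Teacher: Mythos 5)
Your proposal is correct and matches the paper's own argument essentially verbatim: existence via the Intermediate Value Theorem using $\lim_{r\to 0^+} f(r)=+\infty$ and $\lim_{r\to\infty} f(r)=-\infty$, and uniqueness by observing that both terms of $f'(r) = -mr^{m-1} + 2ck\alpha r^{k-1}$ are negative for $r>0$, so Rolle's Theorem forbids a second zero. No gaps.
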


    \begin{proof}
    Because $\lim_{r \to 0^+} f(r) = \infty$ and $\lim_{r \to \infty} f(r) = -\infty$, by the Intermediate Value Theorem, $f$ has at least one positive real zero.  Furthermore, because $k<0$ and $\alpha>0$, $f'(r) = -mr^{m-1}+ 2 c k \alpha r^{k-1} < 0$, by Rolle's Theorem, $f$ can not have two positive real zeros.
    \end{proof}

\begin{lemma}\label{lemma: k neg, j even, alpha pos}
  Let $f$ be as in \eqref{realfunctions} with $k<0$, $j$ even, and $\alpha > 0$. There exists $c_0>0$ such that, for all $0<c <c_0$, $f$ has two positive real zeros and for $c>c_0$, $f$ has no positive real zeros. 
\end{lemma}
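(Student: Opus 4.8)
The plan is to mirror the analysis of Lemma~\ref{elilemma}, with the roles of small and large $c$ interchanged because the relevant extremum is now a minimum rather than a maximum. Since $j$ is even we have $(-1)^j = 1$, so here $f(r) = r^m + 2c\alpha r^k - 1$ with $k < 0$ and $\alpha > 0$. First I would record the boundary behavior: because $k<0$ and $2c\alpha > 0$, the term $2c\alpha r^k$ blows up as $r \to 0^+$, so $\lim_{r\to 0^+} f(r) = +\infty$; and since $m > 0$ the term $r^m$ dominates as $r \to \infty$, so $\lim_{r\to\infty} f(r) = +\infty$. Thus $f$ is positive at both ends of $(0,\infty)$, and any positive zeros must arise from a dip below the axis.

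Next I would locate the unique critical point. Factoring,
$$f'(r) = m r^{m-1} + 2ck\alpha r^{k-1} = r^{k-1}\left(m r^{m-k} + 2ck\alpha\right).$$
For $r>0$ the factor $r^{k-1}$ is positive, and since $k<0$ the constant $2ck\alpha$ is negative while $m r^{m-k}$ increases from $0$ to $\infty$; hence the second factor changes sign exactly once, from negative to positive, at $r_0 = \left(-\tfrac{2ck\alpha}{m}\right)^{1/(m-k)}$. Therefore $f$ strictly decreases on $(0,r_0)$ and strictly increases on $(r_0,\infty)$, so $r_0$ is the global minimum. Combined with the boundary behavior, this shows $f$ has exactly two positive zeros when $f(r_0) < 0$, exactly one when $f(r_0) = 0$, and none when $f(r_0) > 0$.

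The crux is then to show $f(r_0)$ is a strictly increasing function of $c$ that crosses zero exactly once. Substituting $r_0^{m-k} = -\tfrac{2ck\alpha}{m}$ and simplifying exactly as in Lemma~\ref{elilemma} should give
$$f(r_0) = c^{\frac{m}{m-k}}\beta - 1, \qquad \beta = (2\alpha)^{\frac{m}{m-k}}\left[\left(\tfrac{-k}{m}\right)^{\frac{m}{m-k}} + \left(\tfrac{-k}{m}\right)^{\frac{k}{m-k}}\right].$$
The step I expect to need the most care is verifying that $\beta > 0$. Unlike the positive-$k$ case, no cancellation occurs: since $m > |k|$ we have $\tfrac{-k}{m} = \tfrac{|k|}{m} \in (0,1)$, the exponent $\tfrac{m}{m-k}$ is positive while $\tfrac{k}{m-k}$ is negative, so both bracketed powers are positive and hence $\beta > 0$. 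Because $\tfrac{m}{m-k} > 0$, the map $c \mapsto c^{m/(m-k)}\beta - 1$ increases strictly from $-1$ (as $c \to 0^+$) to $+\infty$, so it has a unique positive root $c_0 = \beta^{-(m-k)/m}$. For $0 < c < c_0$ we get $f(r_0) < 0$ and hence two positive zeros, while for $c > c_0$ we get $f(r_0) > 0$ and hence none, which is exactly the claim.
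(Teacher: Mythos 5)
Your proposal is correct and follows essentially the same route as the paper's proof: factor $f'$ to locate the unique critical point $r_0$, identify it as the global minimum given the boundary behavior $f(0^+)=f(\infty)=+\infty$, write $f(r_0)=c^{m/(m-k)}\beta-1$ with $\beta>0$, and take $c_0=\beta^{-(m-k)/m}$. Your added observation that both bracketed powers are positive (so no cancellation is needed, unlike the positive-$k$ case) is a nice clarification but not a different argument.
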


    \begin{proof}
   As above,
    \[ 
        f'(r) =r^{k-1}(m r^{m-k} + 2c k \alpha ).
    \]
    Because $k<0$ and $\alpha>0$, the only positive critical point is at $r_0 = \left(\frac{2c |k| \alpha }{m}\right)^\frac{1}{m-k}$. Also, for $r < r_0$, $f'(r) < 0$ and for $r > r_0$, $f'(r)>0$. Therefore, $r_0$ is the location of the global minimum and the only local extreme of $f$. We claim that there exists $c_0>0$ such that, if $0<c<c_0$, $f(r_0)<0$ and if $c>c_0$, $f(r_0)>0$.
    \begin{align*}
        f(r_0) &= (2c \alpha)^\frac{m}{m-k} \left(  \left( \frac{|k|}{m}\right)^\frac{m}{m-k}  +\left(\frac{|k|}{m} \right)^\frac{k}{m-k}    \right)-1\\
        &=c^\frac{m}{m-k} \beta - 1
    \end{align*}
    where
    \[
    \beta = (2 \alpha)^\frac{m}{m-k}\left(  \left( \frac{|k|}{m}\right)^\frac{m}{m-k}  +\left(\frac{|k|}{m} \right)^\frac{k}{m-k}    \right).
    \]
    Because $\alpha > 0$, $\beta >0$. Let $c_0=\beta^{-\frac{m-k}{m}}$. Then if $0<c<c_0$, $f(r_0)<0$ and if $c>c_0$, $f(r_0)>0$. Because $\lim_{r \to 0^+} f(r) = \infty$, $\lim_{r \to \infty} f(r)= \infty$, and $f(f_0)$ is the global minimum of $f$, when $0<c < c_0$, $f$ has precisely two positive real zeros and when $c>c_0$, $f$ has no positive real zeros. 
    \end{proof}

\section{Counting Cases}

Recall, by Lemma \ref{lemma1}, the zeros of our function $p$ lie on the $2m$ rays in the complex plane with angles $\theta = \frac{j\pi}{m}$ for $j = 0,1,2, \dots,  2m-1$. We showed in Sections 2 and 3 that, associated with each ray, is a real-valued function $f_j$. When $r$ is a positive real zero of $f_j$, the original function $p$ has a zero on the $j$-th ray a distance $r$ from the origin. Thus counting the zeros of $p$ is equivalent to counting the positive real zeros of the $f_j$. The difficulty comes from the fact that $f_j$ can take one of six forms depending on the parity of $j$, the sign of $k$, and whether $\alpha = \cos \left(\frac{k j \pi}{m} \right)$ is positive, negative, or zero. This last dependence is particularly difficult to unravel. More specifically, the function $t \mapsto \cos \left( \frac{k \pi}{m} t \right)$ completes $k$ cycles on an interval of length $2m$ and determining how many integer values of $t$ on this interval make it positive, negative, or zero depends on some number-theoretic properties of $k$ and $m$. We address this question in this section.

The key observation of this section is that 
\[
\cos\left( 
 \frac{kj \pi}{m}\right) = \Re \left(e^{i\frac{ k j \pi}{m}} \right) =\Re \left( e^{i \frac{k \pi}{m}} \right)^j
 \]
 and so we can consider instead how the powers of certain roots of unity are distributed around the unit circle.  We thus recall some basic facts about roots of unity. Let $\omega$ be a primitive $q$-th root of unity. Then $\omega$ generates a cyclic group of order $q$ under multiplication, commonly denoted by $\langle \omega \rangle$, and the set $G = \{\omega^\ell : 0 \leq \ell \leq q-1\}$ contains all $q$ members of the group.  Thus there is a one-to-one correspondence between the set of $\omega^\ell$ for $j \in \{ 0, 1, \ldots, q-1\}$ and the $q$ equally-spaced points on the unit circle associated with angles of the form $\frac{2j\pi}{q}$. However, which $\omega^\ell$ is associated with which point depends on which primitive $q$-th root of unity we take and is difficult to describe. Fortunately, we do not need such precise information; we just need to know how many $\ell$ of a certain parity are associate with points in the right half-plane, in the left half-plane, and on the imaginary axis. The propositions in this section answer these questions. 

\begin{proposition}\label{rightside}
    Let $\omega$ be a primitive $q$-th root of unity. Then there are $2\lfloor\frac{q-1}{4}\rfloor + 1$ elements of $\langle \omega \rangle$ with positive real part, i.e., there are $2\lfloor\frac{q-1}{4}\rfloor + 1$ values of $\ell \in \{0, 1, \ldots, q-1\}$ for which $\Re \omega^\ell >0$. 
\end{proposition}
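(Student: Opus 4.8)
The plan is to reduce the statement to counting all $q$-th roots of unity with positive real part, describe that count via the arc in the right half-plane, and then use a conjugate-symmetry pairing to evaluate it.

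The key initial observation is that the set $\langle \omega \rangle = \{\omega^\ell : 0 \le \ell \le q-1\}$ is exactly the set of \emph{all} $q$-th roots of unity, regardless of which primitive $q$-th root $\omega$ we chose. Hence the quantity we seek does not depend on $\omega$, and we may as well take $\omega = e^{2\pi i/q}$, so that $\Re\,\omega^\ell = \cos\left(\frac{2\pi \ell}{q}\right)$. Such a root has positive real part precisely when its angle $\frac{2\pi\ell}{q}$, which lies in $[0,2\pi)$, belongs to the right half-plane arc $[0,\tfrac{\pi}{2}) \cup (\tfrac{3\pi}{2}, 2\pi)$; the endpoints $\tfrac{\pi}{2}$ and $\tfrac{3\pi}{2}$ are excluded because they are exactly where the cosine vanishes.

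Next I would count the integers $\ell \in \{0,\ldots,q-1\}$ falling in each arc. The lower arc $[0,\tfrac{\pi}{2})$ corresponds to $0 \le \ell < \tfrac{q}{4}$, which contains $\lceil \tfrac{q}{4} \rceil$ integers (this holds whether or not $\tfrac{q}{4}$ is an integer, since $\ell = \tfrac{q}{4}$, where the cosine is zero, is excluded). For the upper arc $(\tfrac{3\pi}{2}, 2\pi)$, i.e. $\tfrac{3q}{4} < \ell < q$, I would invoke the conjugate symmetry $\ell \mapsto q-\ell$: since $\cos\left(\frac{2\pi(q-\ell)}{q}\right) = \cos\left(\frac{2\pi\ell}{q}\right)$, this bijection sends the upper arc to the set $0 < \ell' < \tfrac{q}{4}$, which has $\lceil\tfrac{q}{4}\rceil - 1$ integers, exactly one fewer than the lower arc. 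The discrepancy is the root $\ell = 0$, namely $\omega^0 = 1$, which is real and positive and is counted once, in the lower arc. Summing the two contributions gives $2\lceil \tfrac{q}{4}\rceil - 1$, and I would finish by applying the elementary identity $\lceil \tfrac{q}{4}\rceil = \lfloor \tfrac{q-1}{4}\rfloor + 1$, valid for every positive integer $q$, which rewrites the total as $2\lfloor\tfrac{q-1}{4}\rfloor + 1$.

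The main obstacle is the boundary bookkeeping: correctly excluding the purely imaginary roots $\pm i$ that occur when $4 \mid q$, and ensuring the conjugate pairing counts the fixed root $\omega^0 = 1$ exactly once rather than zero or two times. If the symmetry argument feels too delicate to present cleanly, a robust alternative is a direct case analysis on $q \bmod 4$, counting the integers in $[0,\tfrac{q}{4})$ and $(\tfrac{3q}{4}, q)$ in each residue class and verifying that all four cases collapse to $2\lfloor\tfrac{q-1}{4}\rfloor + 1$; the symmetry route is preferable precisely because it avoids this fourfold split.
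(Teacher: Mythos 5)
Your proposal is correct and follows essentially the same route as the paper: both arguments identify the roots with positive real part as those whose angle lies in the open right half-plane arc and then count the corresponding integer exponents, the only difference being that the paper counts integers in the symmetric interval $\left(-\tfrac{q}{4},\tfrac{q}{4}\right)$ while you count $[0,\tfrac{q}{4})\cup(\tfrac{3q}{4},q)$ and convert via $\lceil \tfrac{q}{4}\rceil=\lfloor\tfrac{q-1}{4}\rfloor+1$. Your version supplies the arithmetic details that the paper leaves as a bare assertion.
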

\begin{proof}
Because the $q$-th roots of unity are associated with angles of the form $\frac{2\pi j}{q}$ for $j \in \mathbb{Z}$, we need only count the integers $j$ for which
\[
-\frac{\pi}{2} < \frac{2 \pi j}{q} < \frac{\pi}{2},
\]
or, equivalently,
\[
-\frac{q}{4} < j < \frac{q}{4}. 
\]
There are $2 \lfloor \frac{q-1}{4} \rfloor+1$ such integers. 
\end{proof}

%
%
%

For our next results, the parity of $k$ is important. We must understand powers of $\omega := e^{i\frac{k \pi}{m}} = e^{i\frac{2 k \pi}{2m}}$. Throughout this paper, we assume $\gcd(m,k) = 1$, and so if $k$ is odd, $\gcd(2m,k) =1$. In this case, $\omega$ is a primitive $2m$-th root of unity. If, however, $k$ is even, $\omega$ is a primitive $m$-th root of unity. 

\subsection{Odd $k$}

Throughout this section, $\omega = e^{i\frac{2k\pi}{2m}}$. Because $\gcd(2m,k) =1$, $\omega$ is a primitive $2m$-th root of unity and so $G=\{\omega^j : 0 \leq j \leq 2m-1\}$ includes all $2m$-th roots of unity. 

For the next proposition, we introduce some new terminology; we call $\omega^j$ and $\omega^{j'}$ {\em adjacent} $2m$-th roots of unity if they are adjacent on the unit circle, i.e., if their arguments differ by $\frac{2\pi}{2m}$ modulo $2\pi$. 

\begin{proposition}\label{neighbors}
Let $j,j' \in \{0,1,... 2m-1\}$. If $\omega^{j}$ and $\omega^{j'}$ are adjacent $2m$-th roots of unity, then $j$ and $j'$ have opposite parity.
\end{proposition}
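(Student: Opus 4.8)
The plan is to convert the geometric notion of adjacency into a congruence on the exponents and then collapse everything with a single reduction modulo $2$. First I would record the position of each root on the circle: since $\omega = e^{i\frac{2\pi k}{2m}}$, we have $\omega^j = e^{i\frac{2\pi k j}{2m}}$, so $\omega^j$ occupies the slot among the $2m$ equally spaced points indexed by the residue $kj \bmod 2m$. (This is where primitivity matters: because $\gcd(2m,k)=1$, multiplication by $k$ permutes the residues modulo $2m$, so every slot is hit, consistent with $G$ containing all $2m$-th roots of unity.) The argument of $\omega^j$ is thus $\frac{2\pi}{2m}(kj \bmod 2m)$.

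Next I would translate adjacency. By definition $\omega^j$ and $\omega^{j'}$ are adjacent exactly when their arguments differ by $\frac{2\pi}{2m}$ modulo $2\pi$, i.e.\ when the slot indices differ by $\pm 1$ modulo $2m$:
\[
k(j - j') \equiv \pm 1 \pmod{2m}.
\]
The crux is then a one-line parity argument. Since $2m$ is even, reducing this congruence modulo $2$ gives $k(j-j') \equiv 1 \pmod 2$. Because $k$ is odd (the standing hypothesis of this subsection), $k \equiv 1 \pmod 2$, and therefore $j - j' \equiv 1 \pmod 2$. Hence $j - j'$ is odd, which is precisely the statement that $j$ and $j'$ have opposite parity.

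I do not expect a real obstacle here; the whole content is the translation of ``adjacent on the circle'' into the congruence $k(j-j')\equiv \pm 1 \pmod{2m}$, and the only point requiring a moment's care is making that translation airtight (tracking the ``$\bmod\,2\pi$'' and the resulting ``$\pm 1 \pmod{2m}$'' correctly). After that, the divisibility-by-$2$ step is immediate and uses oddness of $k$ in an essential way—indeed it explains why the parity dichotomy fails for even $k$, where $\omega$ is only a primitive $m$-th root of unity and the modulus in the reduced congruence is no longer forced to be even.
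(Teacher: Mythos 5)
Your proposal is correct and follows essentially the same route as the paper: both translate adjacency into the congruence $k(j-j')\equiv \pm 1 \pmod{2m}$ (the paper writes this with an explicit integer $\ell$ as $(j-j')k = 1+2\ell m$) and then reduce modulo $2$, using that an odd product forces $j-j'$ to be odd. No further comment is needed.
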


    \begin{proof}
    Because $\omega^{j}$ and $\omega^{j'}$ are adjacent, there exists an integer $\ell$ such that
    \[
    \frac{2\pi{kj}}{2m} - \frac{2\pi{kj'}}{2m} = \frac{2\pi}{2m} + 2 \ell \pi
    \]
    Thus
    \[
    (j-j')k=1+2\ell m. 
    \]
    Because the right-hand side is odd, so is the left-hand side. Thus, $j-j'$ is odd, meaning that $j$ and $j'$ have opposite parity. 
    \end{proof}

\subsection{Even $k$}

Because $\gcd(m,k)=1$ and $k$ is even, $m$ is odd.  We write $k = 2l$ for some $l \in \mathbb{Z}$. 
Because $\gcd(m,l) = 1$, $\omega =e^{i\frac{k \pi}{m}} =  e^{i\frac{2 l \pi }{m}}$ is a primitive $m$-th root unity. Therefore, for $j \in \{0,1,\dots,  2m-1\}$, $\omega^j$ will hit every $m$-th root unity twice. We now prove the following proposition about the parity of $j,j' \in \{0,1,\dots,  2m-1\}$ such that $\omega^j = \omega^{j'}$.

\begin{proposition}\label{buddies}
For each $j \in \{0,1, \dots,  m-1\}$, there exists a $j' \in \{m,1, \dots,  2m-1\}$ of opposite parity such that $\omega^j = \omega^{j'}$.
\end{proposition}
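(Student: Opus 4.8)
The plan is to exhibit the partner $j'$ explicitly rather than argue abstractly. Since $k=2l$ and $\omega = e^{i 2l\pi/m}$ is a primitive $m$-th root of unity, we have $\omega^m = 1$, so the obvious candidate for a second index producing the same power is $j' = j + m$. First I would verify that this candidate lands in the required range: if $j \in \{0,1,\ldots,m-1\}$, then $j+m \in \{m, m+1, \ldots, 2m-1\}$, which is precisely the upper block of indices in the statement. Next, $\omega^{j'} = \omega^{j+m} = \omega^{j}\,\omega^{m} = \omega^{j}$, so the two powers coincide as claimed.

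It remains to check the parity assertion, and this is the only place where the standing hypotheses genuinely enter. Recall that $\gcd(m,k)=1$ together with $k$ even forces $m$ to be odd (as already noted at the start of this subsection). Since $j' - j = m$ is odd, $j$ and $j'$ must have opposite parity, which finishes the proof.

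I do not expect a real obstacle here; the entire content is the recognition that the parity flip between $j$ and $j+m$ is exactly a consequence of $m$ being odd. One might momentarily worry about whether such a partner is unique, or whether the correspondence $j \mapsto j+m$ accounts for \emph{all} coincidences among the $\omega^{j}$ with $j \in \{0,\ldots,2m-1\}$, but the proposition only asserts the existence of one opposite-parity partner in the upper range, so the single explicit choice $j'=j+m$ is all that is needed.
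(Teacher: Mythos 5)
Your proof is correct and is essentially identical to the paper's: both take $j' = j+m$, use $\omega^m = 1$ to get $\omega^{j'} = \omega^j$, and invoke the oddness of $m$ (forced by $\gcd(m,k)=1$ with $k$ even) to conclude the parity flip. No issues.
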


\begin{proof} If $j \in \{0,1,\dots,  m-1\}$, then $j' = j + m \in \{m,m+1, \dots,  2m-1\}$, and because $m$ is odd, $j$ and $j'$ have opposite parity. Furthermore, as $\omega$ is an $m$-th root of unity, 
\[
\omega^{j'} = \omega^{j+m}=\omega^j. 
\]
\end{proof}
    
In the next subsection, we use \ref{rightside}, \ref{neighbors}, and \ref{buddies} to count even and odd $j$ such that $\cos({\frac{kj\pi}{m}})$ is positive, negative, or zero.








\subsection{Proof of Theorem \ref{theorem1}}

We are now equipped to prove Theorem \ref{theorem1}.

\begin{proof}
Recall, we are counting zeros of $p(z) = z^m+c(z^k+\overline{z}^k)-1$ for $m > k$, $c \in \mathbb{R_+}$, and $\gcd(m,k) = 1$.  By Lemma \ref{lemma1}, the zeros of $p$ lie on the $2m$ rays in the complex plane with angles $\frac{j\pi}{m}$ for $j = 0,1,2,...,2m-1$. We showed in Section 2 that each ray is associated with a real polynomial $f_j$ taking one of six forms depending on the parity of $j$ and whether $\alpha$ is positive, negative, or zero. In Section 3, we counted the positive real zeros for each of these six types of polynomial. It remains to determine the actual form of $f_j$ for each $j \in \{0,2,\ldots, 2m-1\}$. The question is whether $\Re \omega^j$ is positive, negative, or zero for $\omega = e^{i\frac{k \pi}{m}}$. 

A complete proof of the theorem would require many cases. We give the proof for one of the more complicated cases and leave the others to the reader.

Consider the case in which $m \equiv 1 \mod 4$ and $k$ is even. Thus, $k = 2l$ for some $l \in \mathbb{N}$ and $m=4d+1$ for some $d \in \mathbb{N}$. Thus $\omega = e^{i\frac{2l\pi}{m}}$ and $\omega$ is a primitive $m$-th root of unity. By Proposition \ref{rightside}, there are 
\[
2\left\lfloor\frac{m-1}{4}\right\rfloor + 1 = 2d+1 
\]
$m$-th roots of unity with a positive real part. Furthermore, by Proposition \ref{buddies}, for any $m$-th root of unity $\beta$, there exist $j,j' \in \{0, 1, \ldots, 2m-1\}$ of opposite parity with $\beta  = \omega^j = \omega^{j'}$. Therefore, there are $2d+1$ even $j$ and $2d+1$ odd $j$ in $\{0,1,\ldots,2m-1\}$ such that $\Re \omega^j > 0$. Consequently, there are $m-(2d+1)=2d$ even $j$ and $2d$ odd $j$ in $\{0,1,\ldots,2m-1\}$ such that $\Re \omega^j \leq 0$.   





By Lemmas 2--7, 
we have that
\begin{align*}
\text{Even } j, &\quad \Re \omega^j  = 0 \hspace{1em} \rightarrow \hspace{1em} f_j(r) = r^m - 1, \quad \alpha = 0 \hspace{1em} \rightarrow \hspace{1em} 1 \text{ zero } \forall c \\[1em]
\text{Odd } j, &\quad \Re \omega^j = 0 \hspace{1em} \rightarrow \hspace{1em} f_j(r) = -r^m - 1, \quad \alpha = 0 \hspace{1em} \rightarrow \hspace{1em} \text{No zeros } \forall c \\[1em]
\text{Even } j, &\quad \Re\omega^j < 0 \hspace{1em} \rightarrow \hspace{1em} f_j(r) = r^m + 2c\alpha r^k - 1, \quad \alpha < 0 \hspace{1em} \rightarrow \hspace{1em} 1 \text{ zero } \forall c \\[1em]
\text{Odd } j, &\quad \Re\omega^j < 0 \hspace{1em} \rightarrow \hspace{1em} f_j(r) = -r^m + 2c\alpha r^k - 1, \quad \alpha < 0 \hspace{1em} \rightarrow \hspace{1em} \text{No zeros } \forall c \\[1em]
\text{Even } j, &\quad \Re \omega^j > 0 \hspace{1em} \rightarrow \hspace{1em} f_j(r) = r^m + 2c\alpha r^k - 1, \quad \alpha > 0 \hspace{1em} \rightarrow \hspace{1em} 1 \text{ zero } \forall c \\[1em]
\text{Odd } j, &\quad \Re \omega^j > 0 \hspace{1em} \rightarrow \hspace{1em} f_j(r) = -r^m + 2c\alpha r^k - 1, \quad \alpha > 0 \hspace{1em} \rightarrow \hspace{1em} 0 \text{ to } 2 \text{ zeros }
\end{align*}


Therefore, the total number of zeros of $p$ can be calculated by multiplying the number of $j$ that give rise to each case and the number of zeros associated with that case. Thus, when $c$ is small, the number of zeros is
\[
1(2d) + 1(2d + 1) = 4d + 1 = m.
\]
As $c$ increases, eventually for each odd $j$ with $Re \omega^j > 0$ there are 2 new zeros of $p$. Therefore, when $c$ is sufficiently large, the maximum number of zeros of $p$ is
\[ 
m + 2(2d+1) = m + 4d + 2 = 2m + 1.
\]
Therefore, as $c$ increases, the number of zeros of $p$ increases monotonically from a minimum of $m$ to a maximum of $2m + 1$.
\end{proof}

The proofs of the other cases rely on a similar strategy: Use Proposition \ref{rightside} to count the roots of unity with positive real part. Then, using Propositions \ref{neighbors} and \ref{buddies}, count how many of each come from odd $j$ and how many come from even $j$. Then, using lemmas \ref{degposlemma} -- \ref{elilemma}, count the zeros associated with each case. By multiplying the number of $j$ that give rise to each case by the number of zeros associated with that case, the result follows.

\subsection{Proof of Theorem \ref{theorem2}}

We now use a similar technique to prove Theorem \ref{theorem2}.

\begin{proof}
Now we are counting the zeros of  $p(z) = z^m+c(z^{k}+\overline{z}^{k})-1$ {\em for $k$ negative}, $m > |k|$, $c \in \mathbb{R}_+$, and $\gcd(m,k) = 1$. By Lemma \ref{lemma1}, the zeros of $p$ lie on the $2m$ rays in the complex plane with angles $\frac{j\pi}{m}$ for $j = 0,1,2,...2m-1$. We showed in Section 2 that each ray is associated with a real function $f_j$ taking one of six forms depending on the parity of $j$ and whether $\alpha$ is positive, negative, or zero. In Section 3, we counted the positive real zeros for each of these six types of function. It remains to determine the actual form of $f_j$ for each $j \in \{0,2,\ldots, 2m-1\}$. The question is whether $\Re  \omega^j$ is positive, negative, or zero for $\omega = e^{i\frac{k \pi}{m}}$. 

As above, we give the proof for one of the more complicated cases and leave the others to the reader.

Consider the case in which $m \equiv 3 \mod 4$ and $k$ is odd. Thus $m = 4d + 3$ for some $d \in \mathbb{N}$ and $\omega = e^{i\frac{2 k \pi}{2m}}$ is a primitive $2m$-th root of unity. By Proposition \ref{rightside}, there are 
\[
2\left\lfloor\frac{2m-1}{4}\right\rfloor + 1 = 4d + 3 = m 
\]
$2m$-th roots of unity with positive real part. By Proposition \ref{neighbors}, the $j$ associated with adjacent roots of unity have opposite parity. Because $\omega^0 = 1$ and $0$ is even, as we move around the unit circle starting at 1, the $j$ associated with the roots of unity alternate in parity. Thus of the $4d+3$ roots of unity for which $\Re \omega^j >0$, $2d+1$ are associated with even $j$ and $2d+2$ are associated with odd $j$. Because there are $m$ even $j$ and $m$ odd $j$ in $\{0,1,\ldots,2m-1\}$, it follows that there are $2d+2$ even $j$ for which $\Re \omega^j < 0$ and $2d+1$ odd $j$ for which $\Re \omega^j < 0$. For $m=4d+3$, it is easy to check that neither $i$ nor $-i$ is a $2m$-th root of unity, and so there are no $j$ for which $\Re \omega^j = 0$.  



By Lemmas 8--11, 
we have that
\begin{align*}
\text{Even } j, &\quad \Re\omega^j < 0 \hspace{1em} \rightarrow \hspace{1em} f_j(r) = r^m + 2c\alpha r^k - 1, \quad \alpha < 0 \hspace{1em} \rightarrow \hspace{1em} 1 \text{ zero } \forall c \\[1em]
\text{Odd } j, &\quad \Re\omega^j < 0 \hspace{1em} \rightarrow \hspace{1em} f_j(r) = -r^m + 2c\alpha r^k - 1, \quad \alpha < 0 \hspace{1em} \rightarrow \hspace{1em} \text{No zeros } \forall c \\[1em]
\text{Even } j, &\quad \Re\omega^j > 0 \hspace{1em} \rightarrow \hspace{1em} f_j(r) = r^m + 2c\alpha r^k - 1, \quad \alpha > 0 \hspace{1em} \rightarrow \hspace{1em} 2 \text{ to } 0  \text{ zeros }\\[1em]
\text{Odd } j, &\quad \Re\omega^j > 0 \hspace{1em} \rightarrow \hspace{1em} f_j(r) = -r^m + 2c\alpha r^k - 1, \quad \alpha > 0 \hspace{1em} \rightarrow \hspace{1em} 1 \text{ zero } \forall c
\end{align*}


Therefore, the total number of zeros of $p$ can be calculated by multiplying the number of $j$ that give rise to each case and the number of zeros that are produced by each case. Thus, when $c$ is small, the number of zeros is
\[
1(2d+2) + 1(2d+2) + 2(2d+1) = 2m.
\]
As $c$ increases, eventually for each even $j$ with $\Re \omega^j > 0$, two zeros of $p$ disappear. Therefore, when $c$ is sufficiently large, the number of zeros of $p$ is
\[
1(2d+2) + 1(2d+2) = m + 1.
\]
Therefore, as $c$ increases, the number of zeros of $p$ decreases monotonically from a maximum of $2m$ to a minimum of $m+1$.
\end{proof}

\section{Conclusion and Future Directions}

In this paper, we counted zeros for harmonic functions of the form $p(z) = z^m +c(z^k + \overline{z}^k) -1$. Using real-variable techniques, we derived an elegant result that describes how the number of zeros change with the positive real parameter $c$. In the polynomial case (positive $k$), the number of zeros increases as $c$ increases. On the other hand, for negative $k$, the number of zeros decreases as $c$ increases. In the past, such detailed zero-counting theorems have tended to focus on families for which the critical curve separating the sense-preserving and sense-reversing regions is a circle (\cite{BBDHS}, \cite{BDHPWW}, \cite{lee}). Although the critical curve for our family is not a circle, we were able to obtain a similarly detailed theorem because of the additional structure of all our zeros lying on rays. 

Our work opens several directions for further research. One natural extension is to consider the more general family
\[
p(z) = z^m +az^k +b\overline{z}^k -1
\]
where $a$ and $b$ are positive real numbers but $a \neq b$. For such functions, the zeros no longer neatly fall on rays and so the real-variable arguments used in this paper will not apply. Furthermore, the critical curve is complicated and one would expect it to be difficult to obtain a zero-counting theorem that applies to all parts of the $ab$-parameter space. (See \cite{work} and \cite{sam} for examples of the kind of analysis that might be possible.) Still, it would be interesting to know how the maximum number of zeros compares with the family studied in this paper. 

\section{Declarations}
\noindent \textbf{Funding}: Not Applicable \\
\textbf{Data Availability}: Not Applicable \\
\textbf{Conflict of Interest}: The authors declare that they have no conflict of interest.

\bibliographystyle{plain}
\bibliography{Real_Methods}

\end{document}